\title{Inequalities for the $d$th Residual Crank Moments of Overpartitions}
\author{
Ali H. Al-Saedi\\
Department of Mathematics, College of Science\\
Mustansiriya University\\
\href{mailto:alsaedi82@uomustansiriyah.edu.iq}{\nolinkurl{alsaedi82@uomustansiriyah.edu.iq}}\\
\\
Thomas Morrill\\ School of Science\\ The University of New South Wales\\ \href{mailto:t.morrill@adfa.edu.au}{\nolinkurl{t.morrill@adfa.edu.au}}\\
\\
Holly Swisher\\
Department of Mathematics\\
Oregon State University\\
\href{mailto:swisherh@math.oregonstate.edu}{\nolinkurl{swisherh@math.oregonstate.edu}}
}
\date{\today}
\newtheorem{thm}{Theorem}
\newtheorem{lemma}{Lemma}
\newcommand{\ZZ}{\mathbb Z}
\newcommand{\nklist}[2]{{#1}_{1}, \ldots, {#1}_{#2}}
\newcommand{\aqprod}[3]{({#1};{#2})_{#3}}
\newcommand{\ocr}{\overline{cr}}
\newcommand{\ovr}{\frac{\aqprod{-q}{q}{\infty}}{\aqprod{q}{q}{\infty}}}
\begin{document}
\maketitle

\begin{abstract}
Two analogues of the crank function are defined for overpartitions -- the first residual crank and the second residual crank.
This suggests an exploration of crank functions defined for overpartitions whose parts are divisible by an arbitrary $d$.
We examine the positive moments of these crank functions while varying $d$ and prove some inequalities.
\end{abstract}

\section{Introduction}

The partition crank function was originally developed by Andrews and Garvan \cite{Garvan-trip, AGCrank} with the goal of giving a combinatorial proof of Ramanujan's congruence \cite{Ramanujan}
\begin{align} \label{congruence}
  p(11n+6) \equiv 0 \mod 11,
\end{align}
where Dyson's rank function was insufficient.
For a partition $\lambda$, let $\omega(\lambda)$ denote the number of occurrences of $1$ as a part in $\lambda$.
The \emph{crank} of $\lambda$ is then defined as follows.
If $\omega(\lambda) = 0$, then $cr(\lambda)$ is the largest part of $\lambda$.
Otherwise, $cr(\lambda)$ is equal to the number of parts of $\lambda$ which exceed $\omega(\lambda)$, minus $\omega(\lambda)$.

We use the common notation of $|\lambda|$ for the sum of the parts of the partition $\lambda$ and write $\lambda \vdash n$ if $|\lambda| = n$.
Let $M(m,n)$ denote the number of partitions $\lambda \vdash n$ with $cr(\lambda) = m$.
Andrews and Garvan proved \cite{AGCrank}
\begin{align*}
  M(m, 11n + 6) &= \frac{1}{11}p(11n + 6),
\end{align*}
which implies \eqref{congruence}.
The two-variable generating series for the cranks of partitions is given by\footnote{With the usual exception of $n = 1$. See Andrews and Garvan \cite{AGCrank}.}
\begin{align}             \label{Garvan-prod}
  C(z;q) = \sum_{\substack{n \geq 0 \\  m \in \ZZ}} M(m, n) z^m q^n = \frac{\aqprod{q}{q}{\infty}}{\aqprod{zq, q/z}{q}{\infty}}.
\end{align}
Here we use the $q$-Pochammer symbol,
\begin{align*}
 \aqprod{\nklist{a}{k}}{q}{\infty} = \prod_{i=0}^{\infty} (1-a_1 q^i) \cdots (1-a_k q^i).
\end{align*}
From \eqref{Garvan-prod} we can observe that
\begin{align} \label{mirror}
  M(m, n) &= M(-m, n).
\end{align}

The crank function has since been extended to the more general setting of overpartitions.
An overpartition is a non-increasing sequence of positive integers $\lambda = (\nklist{n}{k})$ in which the first occurrence of any part may be overlined.

As an example, the overpartitions of $3$ are
\begin{align*}
  (3),\  (\overline{3}),\  (2, 1),\  (2, \overline{1}),\ (\overline{2}, 1),\  (2, \overline{1}),\  (1, 1, 1),\  (\overline{1}, 1, 1),
\end{align*}
which includes the three ordinary partitions of $3$.
Corteel and Lovejoy \cite{Opartns}  put forward overpartitions as a means to combinatorially interptet $q$-hypergeometric series.

We extend the notations $|\lambda|$ and $\lambda \vdash n$ to overpartitions.
Let $P$ and $\overline{P}$ denote the sets of partitions and overpartitions, respectively, and $\overline{p}(n) = \# \{\lambda \in \overline{P} \ | \ \lambda \vdash n \}$.

Two analogs of the crank function have been defined in work of Bringmann, Lovejoy and Osburn \cite{Bringmann}.
To calculate the \emph{first residual crank} of an overpartition $\lambda$, let $\lambda'$ be the partition whose parts are the non-overlined parts of $\lambda$.
We then define $\ocr_1(\lambda) := cr(\lambda')$.
To calculate the \emph{second residual crank} of the overpartition $\lambda$, let $\lambda'$ instead be the partition whose parts are the non-overlined even parts of $\lambda$, divided by two.
We then define $\ocr_2(\lambda) := cr(\lambda')$.
As an example, $\ocr_1(4, \overline{2}, 1) = cr(4, 1) = 0$ and $\ocr_2(4, \overline{2}, 1) = cr(2) = 2$.

Let $\overline{M[1]}_k^+(n)$ and $\overline{M[2]}_k^+(n)$ denote the $k$th positive moments of these functions,
\begin{align*}
  \overline{M[1]}_k^+(n) &= \sum_{\substack{\lambda \vdash n \\ \ocr_1(\lambda) > 0}} \ocr_1(\lambda)^k\\
  \overline{M[2]}_k^+(n) &= \sum_{\substack{\lambda \vdash n \\ \ocr_2(\lambda) > 0}} \ocr_2(\lambda)^k.
\end{align*}
Here, we only sum over overpartitions whose crank is positive;
otherwise, the moments would vanish for odd $k$.
Larsen, Rust and Swisher \cite{Swish} have shown that for $n > 1$,
\begin{align*}
  \overline{M[2]}_1^+(n) < \overline{M[1]}_1^+(n).
\end{align*}
%
We extend this result both in the weight of the moment $k$, and the divisor $d$.
\begin{thm} \label{thm-please}
 For all $d,k \geq 1$,
$$
  \overline{M[d+1]}^+_k(n) \leq \overline{M[d] }^+_k(n)
$$
with equality if and only if $\overline{M[d] }^+_k(n) = 0$, which occurs if and only if $n < d$.
\end{thm}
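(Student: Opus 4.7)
The plan is to compute a generating function for $\overline{M[d]}^+_k$, reduce the inequality to one coefficient-positivity statement, and then extract the equality case from the monotonicity of the overpartition counting function.

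First I decompose each contributing overpartition as $\lambda = (d\mu)\sqcup\rho$, where $d\mu$ is the multi-set of non-overlined parts of $\lambda$ divisible by $d$ (so $\ocr_d(\lambda)=cr(\mu)$) and $\rho$ is a \emph{$d$-residual overpartition}: one with no non-overlined multiples of $d$. Splitting such a $\rho$ into overlined distinct parts and non-overlined parts not divisible by $d$ gives the generating function $G_d(q) = \tovr\,\aqprod{q^d}{q^d}{\infty}$, and combined with $F_k(q):=\sum_m M_k^+(m)\,q^m$ the decomposition yields
\begin{equation*}
  \sum_{n\geq 0} \overline{M[d]}^+_k(n)\,q^n \;=\; F_k(q^d)\,G_d(q) \;=\; \tovr\,\tilde F_k(q^d), \qquad \tilde F_k(q):=F_k(q)\,\aqprod{q}{q}{\infty}.
\end{equation*}
Taking the difference of the identities for $d$ and $d+1$ via $\tilde F_k(q^d)-\tilde F_k(q^{d+1})=\sum_{m\geq 1}\tilde f_k(m)\,q^{dm}(1-q^m)$ and reading off the $q^n$-coefficient produces
\begin{equation*}
  \overline{M[d]}^+_k(n) - \overline{M[d+1]}^+_k(n) \;=\; \sum_{m\geq 1}\tilde f_k(m)\bigl[\overline{p}(n-dm)-\overline{p}(n-(d+1)m)\bigr],
\end{equation*}
where $\tilde f_k(m):=[q^m]\tilde F_k$ and $\overline{p}$ is the overpartition counting function. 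Since $\overline{p}$ is strictly increasing on $\ZZ_{\geq 0}$ and zero on negatives (the injection is ``append a non-overlined $1$''), every bracket is $\geq 0$, and the whole theorem reduces to the claim that $\tilde f_k(m)\geq 0$ for every $m\geq 1$.

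This positivity lemma is the main obstacle. My plan is to prove it through Garvan's vector-partition model for the crank, which realizes $C(z,q)\,\aqprod{q}{q}{\infty}$ as a generating series enumerating (signed) vector partitions carrying the Garvan crank; extracting $\sum_{r>0}r^k[z^r]$ and arranging a sign-reversing involution on the negatively-signed configurations should present $\tilde F_k(q)$ as a manifestly nonnegative enumeration of pointed positively-cranked vector partitions. A more elementary route, via the pentagonal-number expansion $\tilde f_k(n)=\sum_j(-1)^j M_k^+(n-p_j)$ and a direct partition-theoretic involution generalizing the Larsen--Rust--Swisher argument in the case $d=1$, $k=1$, is also worth attempting.

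Finally, for the equality case, $M_k^+(1)=1$ forces $\tilde f_k(1)=1$, so the $m=1$ summand in the differencing identity is strictly positive exactly when $n\geq d$. The same convolutional formula $\overline{M[d]}^+_k(n)=\sum_{m\geq 1} M_k^+(m)\,g_d(n-dm)$ (with $g_d(j):=[q^j]G_d(q)$) makes $\overline{M[d]}^+_k(n)$ a sum of nonnegative integers whose $m=1$ term is positive iff $n\geq d$, so $\overline{M[d]}^+_k(n)>0$ iff $n\geq d$. Both conditions coincide, yielding the claimed equality criterion $n<d$.
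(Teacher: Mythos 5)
Your reduction is essentially the paper's: your $\tilde F_k(q) = F_k(q)\aqprod{q}{q}{\infty}$ is exactly the series $h_k(q)$ that the paper obtains from the Andrews--Chan--Kim expansion of the positive crank moments, your factorization $\tovr\,\tilde F_k(q^d)$ is Lemma \ref{Euler-expand}, and your differencing step combined with the strict monotonicity of $\overline{p}$ is the same convolution argument used in the paper's proof of Theorem \ref{thm-please}. Your equality analysis via $\tilde f_k(1)=1$ and the vanishing of every bracket when $n<d$ is also sound.

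The genuine gap is that you have reduced the entire theorem to the claim $\tilde f_k(m)\geq 0$ for all $m\geq 1$ and then not proved that claim: you name it as ``the main obstacle,'' describe two candidate strategies (a sign-reversing involution on Garvan's vector partitions, or an involution attached to the pentagonal-number expansion $\tilde f_k(n)=\sum_j(-1)^j M_k^+(n-g_j)$ generalizing Larsen--Rust--Swisher), and execute neither; neither is routine. This claim is precisely the paper's Lemma \ref{h}. The paper proves it by expanding $h_k(q)=\sum_{n\geq 1}(-1)^{n+1}q^{(n^2+n)/2}A_k(q^n)/(1-q^n)^k$ termwise into monomials $\binom{m+k-1}{k-1}A_{k,i}\,q^{e_{i,s}(m,t)}$ indexed by writing $n=2^t s$ with $s$ odd, so that the $t=0$ terms are positive and the $t\geq 1$ terms are negative, and then matching each negative term injectively to a positive term with the same exponent via $m\mapsto m'=\tfrac{(2^{2t}-1)s+2^t-1}{2}+(2^t-1)i+2^t m$; since $m'>m$, each paired difference of binomial coefficients is positive. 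Until you supply an argument of this kind --- or actually carry out one of your two sketches --- the proof is incomplete at its central step.
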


Additionally, we give necessary and sufficient conditions on a related inequality.

\begin{thm} \label{easy} 
 For all $d,k \geq 1$,
$$
  \overline{M[d]}^+_{k}(n) \leq \overline{M[d]}^+_{k+1}(n)
$$
with equality if and only if $n < 2d$.
\end{thm}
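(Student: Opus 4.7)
The plan is a term-by-term comparison within the sum defining $\overline{M[d]}^+_k(n)$. Every contributing overpartition $\lambda \vdash n$ has $\ocr_d(\lambda)$ a positive integer, so $\ocr_d(\lambda) \geq 1$ and therefore $\ocr_d(\lambda)^{k+1} \geq \ocr_d(\lambda)^k$, with equality if and only if $\ocr_d(\lambda) = 1$. Summing gives the inequality, and equality between the two moments forces every contributing $\lambda$ to satisfy $\ocr_d(\lambda) = 1$; equivalently, no $\lambda \vdash n$ has $\ocr_d(\lambda) \geq 2$.

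I would then pin down the equality condition by splitting on the size of $n$. If $n < d$, every part of every overpartition of $n$ is strictly smaller than $d$, so the residual partition $\lambda'$ whose crank defines $\ocr_d(\lambda)$ is empty and there are no contributors at all. If $d \leq n < 2d$, the only positive multiple of $d$ that can occur as a non-overlined part is $d$ itself, and with multiplicity at most one, so $\lambda'$ is either empty or equals $(1)$; since $cr((1)) = -1$, neither case contributes to the positive moment. In both ranges both sums vanish and equality holds trivially.

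For $n \geq 2d$ it suffices to exhibit a single $\lambda \vdash n$ with $\ocr_d(\lambda) \geq 2$ to force strict inequality. When $d = 1$ take $\lambda = (n)$, so $\lambda' = (n)$ and $\ocr_1(\lambda) = cr((n)) = n \geq 2$. When $d \geq 2$ take $\lambda = (2d, 1, 1, \ldots, 1)$ with $n - 2d$ trailing ones; since ones are not divisible by $d$ they are excluded from $\lambda'$, which reduces to $(2)$, giving $\ocr_d(\lambda) = cr((2)) = 2$. The term-by-term step is essentially free; the only mild obstacle is the bookkeeping for the window $d \leq n < 2d$, which rests on the observation that the only positive multiple of $d$ that fits inside $n$ is $d$ itself, and that the resulting one-part partition has negative crank.
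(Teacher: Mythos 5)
Your proof is correct and follows the same overall skeleton as the paper's: compare the two moments term by term via $m^{k} \le m^{k+1}$ for $m \ge 1$, note that equality forces every contributing overpartition to have residual crank exactly $1$, and then determine exactly when a residual crank of at least $2$ can occur. Two details differ, both worth noting. First, in the window $d \le n < 2d$ the paper computes $\overline{M[d]}^+_{k}(d+\ell) = \overline{p}(\ell)$ for every $k$ by letting the residual partition $(1)$ contribute to $\overline{M[d]}(1,n)$ (the generating-function convention behind the paper's footnote on the crank generating series), whereas you use the strict combinatorial convention $cr((1)) = -1$ and conclude that both moments vanish there; either reading yields the required equality, but be aware that the two conventions disagree on the common value in that range. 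Second, and more substantively, for $n \ge 2d$ the paper uses the single witness $(2d, 1, \ldots, 1)$ for all $d$, which breaks down when $d = 1$ and $n \ge 3$: there the trailing ones are not excluded from $\lambda'$, and for instance $cr((2,1)) = 0$. Your case split --- taking $(n)$ when $d = 1$, with crank $n \ge 2$, and $(2d, 1, \ldots, 1)$ only when $d \ge 2$, where $\lambda'$ collapses to $(2)$ --- repairs this gap, so your argument is in fact slightly more careful than the published one. The proof is complete.
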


\section{The $d$th Residual Crank}

We define the $d$th residual crank in the obvious manner.
For an overpartition $\lambda$, first let $\lambda'$ be the partition whose parts are the non-overlined parts of $\lambda$ which are divisible by $d$.
Then, divide all parts of $\lambda'$ by $d$.
We finally define $\ocr_d(\lambda) := cr(\lambda')$.

We first calculate some generating series.
Let $\overline{M[d]}(m,n)$ denote the number of overpartitions of $n$ with $d$th residual crank $m$. 
\begin{lemma} \label{d-product}
  The two variable generating series for $\overline{M[d]}(m,n)$ is given by
  \begin{align*}
    \sum_{\substack{n \geq 0 \\  m \in \ZZ}}\overline{M[d]}(m,n) z^m q^n = \ovr \frac{\aqprod{q^d, q^d}{q^d}{\infty}}{\aqprod{zq^d, q^d/z}{q^d}{\infty}}.
  \end{align*}
\end{lemma}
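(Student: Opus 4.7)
The plan is to establish the product formula by bijectively decomposing each overpartition $\lambda$ into two independent pieces, then computing their generating series separately. Specifically, I would split $\lambda$ into (i) the submultiset $\mu$ of non-overlined parts of $\lambda$ that are divisible by $d$, and (ii) everything else, namely the overlined parts together with the non-overlined parts not divisible by $d$. Since $\ocr_d(\lambda)$ depends only on piece (i) (it is $cr$ applied to $\mu/d$), the bivariate generating series factors as the product of the two generating series.

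For piece (i), a partition $\mu$ whose parts are all divisible by $d$ is in bijection with an arbitrary partition $\nu$ via $\mu \leftrightarrow d\nu$. This bijection satisfies $|\mu| = d|\nu|$ and $\ocr_d(\lambda) = cr(\nu)$. Hence the generating series of piece (i) with respect to $z^{\ocr_d} q^{|\mu|}$ is $C(z;q^d)$, which by \eqref{Garvan-prod} equals
\[
  \frac{\aqprod{q^d}{q^d}{\infty}}{\aqprod{zq^d, q^d/z}{q^d}{\infty}}.
\]

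For piece (ii), the overlined subpartition has distinct parts with generating function $\aqprod{-q}{q}{\infty}$, while the non-overlined parts not divisible by $d$ have generating function $\frac{\aqprod{q^d}{q^d}{\infty}}{\aqprod{q}{q}{\infty}}$, obtained from the partition product $1/\aqprod{q}{q}{\infty}$ by removing the factors corresponding to multiples of $d$. Multiplying these gives $\tovr \aqprod{q^d}{q^d}{\infty}$ for piece (ii).

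Combining pieces (i) and (ii) and using $(\aqprod{q^d}{q^d}{\infty})^2 = \aqprod{q^d,q^d}{q^d}{\infty}$ yields the claimed formula. There is no significant obstacle: the argument is a bookkeeping exercise once the overpartition is partitioned into the crank-bearing component and the inert remainder. The only subtlety is the minor anomaly at $n=1$ that the authors allude to in their footnote on \eqref{Garvan-prod}; since this is a formal identity in $z$ and $q$ it does not affect the result.
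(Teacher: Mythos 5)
Your proposal is correct and follows essentially the same route as the paper: the paper likewise factors the generating series into the piece $\frac{\aqprod{q^d}{q^d}{\infty}}{\aqprod{zq^d, q^d/z}{q^d}{\infty}}$ generating the non-overlined parts divisible by $d$ (tracking $\ocr_d$ via \eqref{Garvan-prod} applied at $q^d$) and the piece $\tovr\aqprod{q^d}{q^d}{\infty}$ generating the overlined parts together with the non-overlined parts not divisible by $d$. Your write-up is if anything slightly more explicit about the bijection $\mu \leftrightarrow d\nu$ and the $n=1$ caveat.
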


\begin{proof}[Proof of Lemma \ref{d-product}]
  By examining \eqref{Garvan-prod}, we see that
  \begin{align*}
    \frac{\aqprod{q^d}{q^d}{\infty}}{\aqprod{zq^d, q^d/z}{q^d}{\infty}}
  \end{align*}
  generates the nonoverlined parts of $\lambda$ which are divisible by $d$, with the exponent of $z$ equal to $\ocr_d(\lambda)$.
  Simillarly,
  \begin{align*}
    \frac{\aqprod{-q}{q}{\infty}\aqprod{q^d}{q^d}{\infty}}{\aqprod{q}{q}{\infty}}
  \end{align*}
  generates the parts of $\lambda$ which are not divisible by $d$, or are overlined.
\end{proof}

Let
\begin{align}
  \overline{M[d]}_k^+(n) &= \sum_{\substack{\lambda \vdash n \\ \ocr_d(\lambda) > 0}} \ocr_d(\lambda)^k.
\end{align}
\begin{lemma} \label{Euler-expand}
  The generating series for $\overline{M[d]}_k^+(n)$ is given by
  \begin{align*}
    \overline{C[d]}^+_k(q) :=  \sum_{n\geq 0}   \overline{M[d]}_k^+(n)  q^n = \ovr \sum_{n\geq 1}(-1)^{n+1} \frac{q^{d\tfrac{n^2+n}{2}}A_k(q^{dn})}{(1-q^{dn})^k}.
  \end{align*}
\end{lemma}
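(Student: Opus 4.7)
The plan is to combine the two-variable generating series of Lemma \ref{d-product} with a classical Hecke-type expansion of the Andrews--Garvan crank series, and then extract the $k$th positive moment coefficient-by-coefficient in $z$. Writing
$$\sum_{\substack{n \geq 0 \\ m \in \ZZ}}\overline{M[d]}(m,n)\,z^m q^n = \tovr \cdot G(z;q^d), \qquad G(z;q) := \frac{\aqprod{q}{q}{\infty}^2}{\aqprod{zq, q/z}{q}{\infty}},$$
the central input is the Hecke-type identity
$$G(z;q) = (1-z)\sum_{n \in \ZZ}\frac{(-1)^n q^{n(n+1)/2}}{1-zq^n},$$
which follows from the Jacobi triple product via partial fractions and exhibits $G(z;q)$ as an explicit Lambert series in $z$.

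Next I would extract the coefficient of $z^m$ for $m \geq 1$. Interpreted as a formal Laurent series with coefficients in $\ZZ[[q]]$, the factor $1/(1-zq^n)$ expands as $\sum_{j \geq 0}(zq^n)^j$ for $n \geq 1$ (only non-negative $z$-powers) and as $-\sum_{j \geq 1}q^{-nj}z^{-j}$ for $n \leq -1$ (only strictly negative $z$-powers), while the $n = 0$ term is annihilated by the prefactor $(1-z)$. Hence only $n \geq 1$ contributes to positive $z$-powers, and a short bookkeeping yields
$$[z^m]\,G(z;q) = \sum_{n \geq 1}(-1)^{n+1}(1-q^n)\,q^{n(n-1)/2 + nm}, \qquad m \geq 1.$$

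Finally, multiplying by $m^k$, summing over $m \geq 1$, and applying the Eulerian identity $\sum_{m \geq 1}m^k y^m = yA_k(y)/(1-y)^{k+1}$, the factor $(1 - q^n)$ cancels one power of the denominator and the $q$-exponents combine to $n(n+1)/2$. Substituting $q \mapsto q^d$ and restoring the prefactor $\tovr$ then yields the stated formula. The main obstacle is setting up the Hecke-type identity for $G(z;q)$ and handling the mixed Laurent-series expansions cleanly; once these are in place the moment extraction is routine $q$-series manipulation.
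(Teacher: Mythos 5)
Your proof is correct, and it reaches the same structural reduction as the paper --- namely, factor the two-variable series of Lemma \ref{d-product} as $\tovr$ times the ordinary crank series in $q^d$, so that the $d$th residual positive moments are the overpartition prefactor times the ordinary positive crank moment series $h_k(q^d)$ --- but you handle the key single-variable step differently. The paper simply cites Andrews, Chan, and Kim for the expansion $C_k^+(q) = \frac{1}{\aqprod{q}{q}{\infty}}\sum_{n\geq 1}(-1)^{n+1}q^{(n^2+n)/2}A_k(q^n)/(1-q^n)^k$, whereas you re-derive it from the classical Lambert-series identity $\frac{\aqprod{q}{q}{\infty}^2}{\aqprod{zq,q/z}{q}{\infty}} = (1-z)\sum_{n\in\ZZ}(-1)^nq^{n(n+1)/2}/(1-zq^n)$. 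Your bookkeeping checks out: for $m\geq 1$ only $n\geq 1$ contributes (the $n=0$ term gives the constant $1$, not literally ``annihilated,'' but it carries no positive $z$-power, and the $n\leq -1$ terms contribute only nonpositive powers), the coefficient extraction gives $(-1)^{n+1}(1-q^n)q^{n(n-1)/2+nm}$, and the Eulerian identity $\sum_{m\geq 1}m^k y^m = yA_k(y)/(1-y)^{k+1}$ collapses this to $h_k(q)$ exactly as claimed. What your route buys is self-containedness --- you are effectively inlining the proof of the cited ACK result, which rests on the same partial-fraction identity --- at the cost of having to justify the Hecke-type expansion; what the paper's route buys is brevity, plus it sidesteps your remaining obligation, which is to actually establish that identity from the Jacobi triple product rather than assert it. Note also that the paper's intermediate step with $G_k(z;q^d) + (-1)^kG_k(z^{-1};q^d)$ and the symmetry $M(m,n)=M(-m,n)$ plays no role in your version, since you extract positive $z$-powers directly; that is a legitimate simplification.
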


\begin{proof}
[Proof of Lemma  \ref{Euler-expand}]
Let
\begin{align*}
  G_k(z;q) := \sum_{\substack{\lambda \in P \\ cr(\lambda) > 0}} cr(\lambda)^k z^k q^{|\lambda|}.
\end{align*}
From the symmetry $M(m,n) = M(-m,n)$, we have
\begin{align*}
  \bigg(z \frac{\partial}{\partial z}\bigg)^k \overline{C[d]}(z;q) &= 
  (-q;q)_\infty \frac{(q^d;q^d)_\infty}{(q;q)_\infty} \cdot \bigg(z \frac{\partial}{\partial z}\bigg)^k C(z;q^d)\\
  &= (-q;q)_\infty \frac{(q^d;q^d)_\infty}{(q;q)_\infty} \cdot \bigg\{G_k(z; q^d) + (-1)^{k} G_k(z^{-1}; q^d) \bigg \}.
\end{align*}
Andrew, Chan, and Kim \cite{ACK} have expanded the positive moment generating series for the partition crank as
\begin{align*}
  C^+_k(q)
  = \frac{1}{\aqprod{q}{q}{\infty}} \sum_{n\geq 1}(-1)^{n+1}\frac{q^{\tfrac{n^2+n}{2}}A_k(q^{n})}{(1-q^{n})^k}
  = G(1;q),
\end{align*}
where $A_k(t)$ denotes the $k^\text{th}$ \emph{Eulerian polynomial}
\begin{align*}
  A_k(t) &= \sum_{i=0}^{k-1} A_{k,i} t^i \\
  A_{k,i} &= (i+1)A_{k-1,i} + (k-i) A_{k-1,i-1}.
\end{align*}
We see that
\begin{align*}
  \overline{C[d]}^+_k(q) =
  (-q;q)_\infty \frac{(q^d;q^d)_\infty}{(q;q)_\infty} \cdot G_k(1; q^d)=
    \ovr \sum_{n\geq 1}(-1)^{n+1} \frac{q^{d\tfrac{n^2+n}{2}}A_k(q^{dn})}{(1-q^{dn})^k}.
\end{align*}
\end{proof}

%

\section{Inequalities}

Let
\begin{align}
  h_k(q) := \sum_{n\geq 1}(-1)^{n+1} \frac{q^{\tfrac{n^2+n}{2}}A_k(q^{n})}{(1-q^{n})^k},
\end{align}
so that $\displaystyle \overline{C[d]}^+_k(q)= \ovr h_k(q^d)$.

\begin{lemma}\label{h}
  For all $k \geq 0$, the  series  $h_k(q)$ has nonnegative coefficients.
\end{lemma}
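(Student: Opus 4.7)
The plan is to extract an explicit formula for $[q^N] h_k(q)$ and establish nonnegativity through a pairing argument on the odd divisors of $N$. The starting point is the Eulerian-polynomial identity
\[
\frac{A_k(t)}{(1-t)^k} = \sum_{j \geq 0}\bigl[(j+1)^k - j^k\bigr]t^j,
\]
which follows from the standard generating function $\sum_{j \geq 1} j^k t^j = t A_k(t)/(1-t)^{k+1}$ by multiplying through by $(1-t)/t$. Substituting $t = q^n$ and expanding, I would rewrite
\[
h_k(q) = \sum_{n \geq 1}\sum_{j \geq 0}(-1)^{n+1}\bigl[(j+1)^k - j^k\bigr] q^{n(n+1+2j)/2}.
\]
Setting $m := n + 1 + 2j$, the coefficient of $q^N$ picks up one contribution from each ordered pair $(n, m)$ with $nm = 2N$, $n < m$, and $n + m$ odd. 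Because $n + m$ is odd, exactly one of $n, m$ is odd, and any odd divisor of $2N$ divides $N$; thus such pairs biject with odd divisors $d$ of $N$ via $\{n, m\} = \{d,\, 2N/d\}$, with $d < 2N/d$ exactly when $d^2 < 2N$. The sign $(-1)^{n+1}$ is therefore $+1$ when $d^2 < 2N$ and $-1$ when $d^2 > 2N$ (equality is impossible, as $d$ is odd). Writing $D^\pm := \{d \mid N : d \text{ odd},\, d^2 \lessgtr 2N\}$ and $u_d := \tfrac12(|2N/d - d| - 1) \in \ZZ_{\geq 0}$, one arrives at
\[
[q^N] h_k(q) = \sum_{d \in D^-}\bigl[(u_d + 1)^k - u_d^k\bigr] - \sum_{d \in D^+}\bigl[(u_d + 1)^k - u_d^k\bigr].
\]

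Next I would pair divisors via the involution $d \mapsto M/d$ on the divisors of the odd part $M$ of $N$; since odd divisors of $N$ coincide with divisors of $M$, this is an involution on $D^- \sqcup D^+$. Writing $N = 2^s M$ with $s \geq 0$, the identity $d \cdot (M/d) = M \leq N < 2N$ forces $\min(d, M/d)^2 \leq M < 2N$, so every orbit meets $D^-$. Consequently each nontrivial orbit is either (a) contained in $D^-$, contributing strictly positive terms from each element, or (b) a mixed $D^-\!/D^+$ pair, while any fixed point $d = \sqrt{M}$ lies in $D^-$. For a mixed pair with $d \in D^-$ and $M/d \in D^+$, a direct computation yields
\[
u_d - u_{M/d} = \frac{(2^{s+1} - 1)(d + M/d)}{2} > 0,
\]
and since $x \mapsto (x+1)^k - x^k$ is nondecreasing on $\ZZ_{\geq 0}$ for every $k \geq 1$, the pair's net contribution is nonnegative. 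Summing over orbits gives $[q^N] h_k(q) \geq 0$ for all $k \geq 1$. The case $k = 0$ is vacuous under the paper's convention that the empty sum $A_0 \equiv 0$ forces $h_0 \equiv 0$.

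The main obstacle I anticipate is the bookkeeping: reparameterizing exponent pairs cleanly as odd-divisor data, tracking signs correctly, and carrying out the computation $u_d - u_{M/d} = (2^{s+1}-1)(d + M/d)/2$ without slip. The structural key is $d \cdot (M/d) = M \leq N < 2N$, which rules out orbits having both elements in $D^+$ and guarantees that the monotonicity of $x \mapsto (x+1)^k - x^k$ is enough to absorb every negative contribution.
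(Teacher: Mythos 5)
Your argument is correct --- the identity $A_k(t)/(1-t)^k=\sum_{j\ge0}[(j+1)^k-j^k]t^j$, the reparametrization of the exponents as $nm/2$ with $nm=2N$ and $n+m$ odd, and the computation $u_d-u_{M/d}=(2^{s+1}-1)(d+M/d)/2$ all check out, and your remark about $k=0$ matches the paper's empty-sum convention for $A_0$. Structurally this is the same pairing argument as the paper's, just packaged differently: the paper expands $A_k(q^n)/(1-q^n)^k$ keeping the Eulerian coefficients $A_{k,i}$ and the binomial coefficients $\binom{m+k-1}{k-1}$ separate, writes $n=2^t s$ with $s$ odd, and builds an explicit exponent-preserving injection $(m,t)\mapsto(m',0)$ from the even-$n$ (negative) terms into the odd-$n$ (positive) terms, concluding via monotonicity of $m\mapsto\binom{m+k-1}{k-1}$; translated into your odd-divisor dictionary, that injection is exactly your involution $d\mapsto M/d$ restricted to the mixed $D^-/D^+$ orbits, and your monotone function $x\mapsto(x+1)^k-x^k$ plays the role of their monotone binomial coefficient. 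What your route buys is a genuine bonus: an exact closed formula
\begin{equation*}
  [q^N]\,h_k(q)=\sum_{\substack{d\mid N\\ d\ \mathrm{odd}}}\varepsilon_d\bigl[(u_d+1)^k-u_d^k\bigr],\qquad \varepsilon_d=\mathrm{sgn}(2N-d^2),
\end{equation*}
which is sharper than mere nonnegativity (for $k=1$ it identifies the coefficient as a signed odd-divisor sum), together with the cleaner single-index coefficient $(j+1)^k-j^k$ in place of the paper's double sum $\sum_{i+m=j}A_{k,i}\binom{m+k-1}{k-1}$, and an involution rather than an injection, which makes the leftover (unpaired) positive terms easier to see. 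Either write-up would serve; yours is slightly more informative, the paper's slightly more self-contained since it never invokes the Worpitzky-type identity.
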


\begin{proof}
  First we expand the Eulerian polynomial $A_k(q^n)$ and the series cofficients of $\tfrac{1}{(1-q^n)^k}$ to see that
  \begin{align*}
     h_k(q)
    =
     \sum_{n\geq 1} \sum_{i=0}^{k-1} \sum_{m\geq 0} (-1)^{n+1} \binom{m+k-1}{k-1}A_{k,i} q^{\tfrac{n^2+n}{2}+in + mn}.
  \end{align*}
  We write $n=2^t s$, with $s$ odd and $t \geq 0$, in order to split the summation into its positive and a negative terms,
  \begin{multline*}
    h_k(q) = \sum_{s \text{ odd}} \sum_{m\geq 0} \sum_{i=0}^{k-1} \binom{m+k-1}{k-1}A_{k,i}q^{e_{i,s}(m,0)}\\
    - \sum_{t\geq 1}\sum_{s \text{ odd}} \sum_{m\geq 0} \sum_{i=0}^{k-1} \binom{m+k-1}{k-1}A_{k,i}q^{e_{i,s}(m,t)},
  \end{multline*}
  where
  $$
    e_{i,s}(m,t) = \frac{2^{2t}s^2+2^ts}{2}+m2^ts+i2^ts.
  $$
  
  Our goal is now to match each negative term belonging to the quadruple summation to a positive term with the same exponent belonging to the triple summation.
  Fix $i$ and $s$.
  We then define $m' = m'(m, t)$ to be
  $$
    m': = \frac{(2^{2t}-1)s + 2^t-1}{2} + (2^t-1)i + 2^t m
  $$
  It is easy to check that $e_{i,s}(m',0) = e_{i,s}(m,t)$.
  For injectivity, if we have $m_1'=m_2'$, then it follows that
  \begin{align*}
    2^{t_1}(2^{t_1}s+2i+2m_1 + 1) &= 2^{t_2}(2^{t_2}s+2i+2m_2 + 1).
  \end{align*}
  Since the parenthetical terms are odd, then $t_1=t_2$.
  This in turn implies $m_1=m_2$. Therefore, the mapping $(m,t)\mapsto (m', 0)$ is injective.

  Pairing summands in this way gives us
\begin{multline*}
  \binom{m'+k-1}{k-1}A_{k,i}q^{e_{i,s}(m',0)} -  \binom{m+k-1}{k-1}A_{k,i}q^{e_{i,s}(m,t)}\\
  = \bigg\{ \binom{m'+k-1}{k-1}-  \binom{m+k-1}{k-1}\bigg\}A_{k,i}q^{e_{i,s}(m,t)}.
\end{multline*}
 As $m' > m$, the term in braces is positive.
 Therefore $h(q)$ has nonnegative coefficients.
\end{proof}

Theorem \ref{thm-please} follows as a corollary to Lemma \ref{h}.
Recall, we seek to prove that
  for all $d,k \geq 1$,
  $$
    \overline{M[d+1]}^+_k(n) \leq \overline{M[d] }^+_k(n)
  $$
  with equality if and only if $\overline{M[d] }^+_k(n) = 0$, which occurs if and only if $n < d$.

\begin{proof}[Proof of Theorem \ref{thm-please}]
Let $\{b_n\}$ be defined by
$$
  h_k(q) = \sum_{n \geq 1} b_n q^n.
$$
Since
$$
  \overline{C[d]}^+_k(q)= \ovr h_k(q^d),
$$
and $\overline{p}(n)$ is increasing, we have
\begin{align*}
  \overline{M[d+1]}^+_{k}(n) =
  \sum_{i+(d+1)j=n}\overline{p}(i) b_j
  \leq \sum_{(i+j)+dj=n}\overline{p}(i+j) b_j
  = \overline{M[d]}^+_{k}(n),
\end{align*}
where the sum is taken over non-negative values of $i$ and $j$.
If $n<d$, then no overpartition of $n$ has parts which are divisible by $d$ or $d+1$, making both moments zero, and in particular, equal.

Conversely, if both sums are equal, then we must have $j=0$ as the only solution of $(i+j) + dj = n$, because $\overline{p}(i) b_j < \overline{p}(i+j) b_j$ for $j \geq 1$.
This implies $n < d$.
\end{proof}

We now prove Theorem \ref{easy} for completenes.
It is sufficient to show that
$
  \overline{M[d]}^+_{k}(n) = \overline{M[d]}^+_{k+1}(n)
$
if and only if $n < 2d$.

\begin{proof}

  As before, $\overline{M[d]}^+_{k}(n) = 0$ if and only if $n < d$, and thus we have equality in this case.
  
  We claim that for $0 \leq \ell <d$, we have $\overline{M[d]}^+_{k}(d + \ell) = \overline{p}(\ell)$ regardless of $k$.
  This is because overpartitions $\lambda$ of $d + \ell$ with the form $\lambda = (d) \cup \mu$, where $\mu \vdash \ell$, contribute $1$ to $\overline{M[d]}(1, n)$
  Overpartitions of $d + \ell$ not of this form\footnote{This includes $\lambda = (\overline{d}) \cup \mu$.} have $\ocr_d(\lambda) = 0$.
  Thus,
  \begin{align*}
    \overline{M[d]}^+_{k}(d + \ell) = \sum_{m \geq 1} m^k \overline{M[d]}(m, d + \ell) = 1^k \overline{M[d]}(1, d + \ell) = \overline{p}(\ell).
  \end{align*}
  
  For $n \geq 2d$, consider the overpartition $\lambda = (2d, 1, \ldots, 1) \vdash n$.
  Since $\ocr_d(\lambda) = 2$, we must have $\overline{M[d]}(2, n) \geq 1$.
  We find that
  \begin{align*}
    \sum_{m \geq 1} m^k \overline{M[d]}(m, n) < \sum_{m \geq 1} m^{k+1} \overline{M[d]}(m, n),
  \end{align*}
  since
  \begin{align*}
    2^k \overline{M[d]}(2, n) <  2^{k+1} \overline{M[d]}(2, n).
  \end{align*}
\end{proof}

\section{Further Study}

Moments of the first and second residual cranks of overpartitions are known to have a relation to moments of the Dyson rank and $M_2$-rank of overpartitions, as well as the smallest parts functions for overpartitions.

In particular, Bringmann, Lovejoy, and Osburn \cite{Bringmann} show that
\begin{align} \label{spt}
  \overline{spt}(n) &= \overline{M[1]}_1(n) - \overline{N[1]}_1(n)\\
  \overline{spt2}(n) &= \overline{M[2]}_1(n) - \overline{N[2]}_1(n).
\end{align}
We note that 
\begin{align*}
  \overline{N[2d]}_k(n) \leq \overline{N[d]}_k(n)
\end{align*}
can be shown using the same argument as in Larsen, Rust, and Swisher \cite{Swish}.
Here, $\overline{N[d]}_k(n)$ is the $k$th positive moment of the $M_d$-rank of overpartitions, which is due to the second author \cite{Morrill}.
These rank moments have the following generating series
\begin{align*}
  \sum_{n\geq 0}   \overline{N[d]}_k^+(n)  q^n
  = 2\ovr \sum_{n\geq 1}(-1)^{n+1} \frac{q^{n^2 + dn}A_k(q^{dn})}{(1+q^{dn})(1-q^{dn})^k}.
\end{align*}

It is natural to ask if there is a relation simillar to \eqref{spt} involving a $d$th smallest parts function.
However, the differences $\overline{M[d]}_1^+(n) - \overline{N[d]}_1^+(n)$ fail to be nonnegative for $d > 2$, whereas we should expect $\overline{spt[d]}(n) \geq 0$.

As part of this study, we would expect the generating series for $\overline{M[d]}_1(n)$  and $\overline{spt[d]}$ to have quasimodular properties \cite{Bringmann}.
This is presently being investigated by the second author and Aleksander Simonic.

\section{Acknowledgements}

The second author is supported by Australian Research Council Discovery Project DP160100932.


\end{document}